\newtheorem{theorem}{Theorem}
\newtheorem{lemma}[theorem]{Lemma}
\newtheorem{corollary}[theorem]{Corollary}
\newtheorem{remark}[theorem]{Remark}
\title{A look at generalized perfect shuffles}
\author[S. Johnson, L. Manny, C.~Van Cott, Q. Zhang]{Samuel Johnson, Lakshman Manny, \\Cornelia A. Van Cott, QiYu Zhang}
\begin{document}
\maketitle
\begin{abstract}
 Standard perfect shuffles involve splitting a deck of $2n$ cards into two stacks and interlacing the cards from the stacks. There are two ways that this interlacing can be done, commonly referred to as an in shuffle and an out shuffle, respectively. In 1983, Diaconis, Graham, and Kantor determined the permutation group generated by in and out shuffles on a deck of $2n$ cards for all $n$. Diaconis et al.~concluded their work by asking whether similar results can be found for so-called generalized perfect shuffles. For these new shuffles, we split a deck of $mn$ cards into $m$ stacks and similarly interlace the cards with an in $m$-shuffle or out $m$-shuffle (denoted $I_m$ and $O_m$, respectively). In this paper, we find the structure of the group generated by these two shuffles for a deck of $m^k$ cards, together with $m^y$-shuffles, for all possible values of $m$, $k$, and $y$. The group structure is completely determined by $k/\gcd(y,k)$ and the parity of $y/\gcd(y,k)$. In particular, the group structure is independent of the value of $m$. \\   \end{abstract}

%   \begin{center}
 % I must complain the cards are ill shuffled till I have a good hand. \\ -- Jonathan Swift (1667 -- 1745)
 % \end{center}
  
% \vspace{.25cm}
 
 \section{Introduction }\label{introduction}

When handed an unshuffled deck of cards, most of us respond in the same way. We attempt (however feebly) to do a so-called {\em perfect shuffle}. A perfect shuffle splits a deck of cards into two equal stacks and then perfectly interlaces the cards from the two stacks one after the other. Only experienced gamblers and magicians can perform perfect shuffles reliably, and yet the mathematics behind perfect shuffles has a rich history, including everything from mathematical card tricks to sophisticated research.%~\cite{Bayer, Diaconis, DiaconisGraham, Ensley, Golomb, Morris2, Ramnath, Rosenthal}.

\begin{figure}
\begin{center}
  \includegraphics[width=7cm]{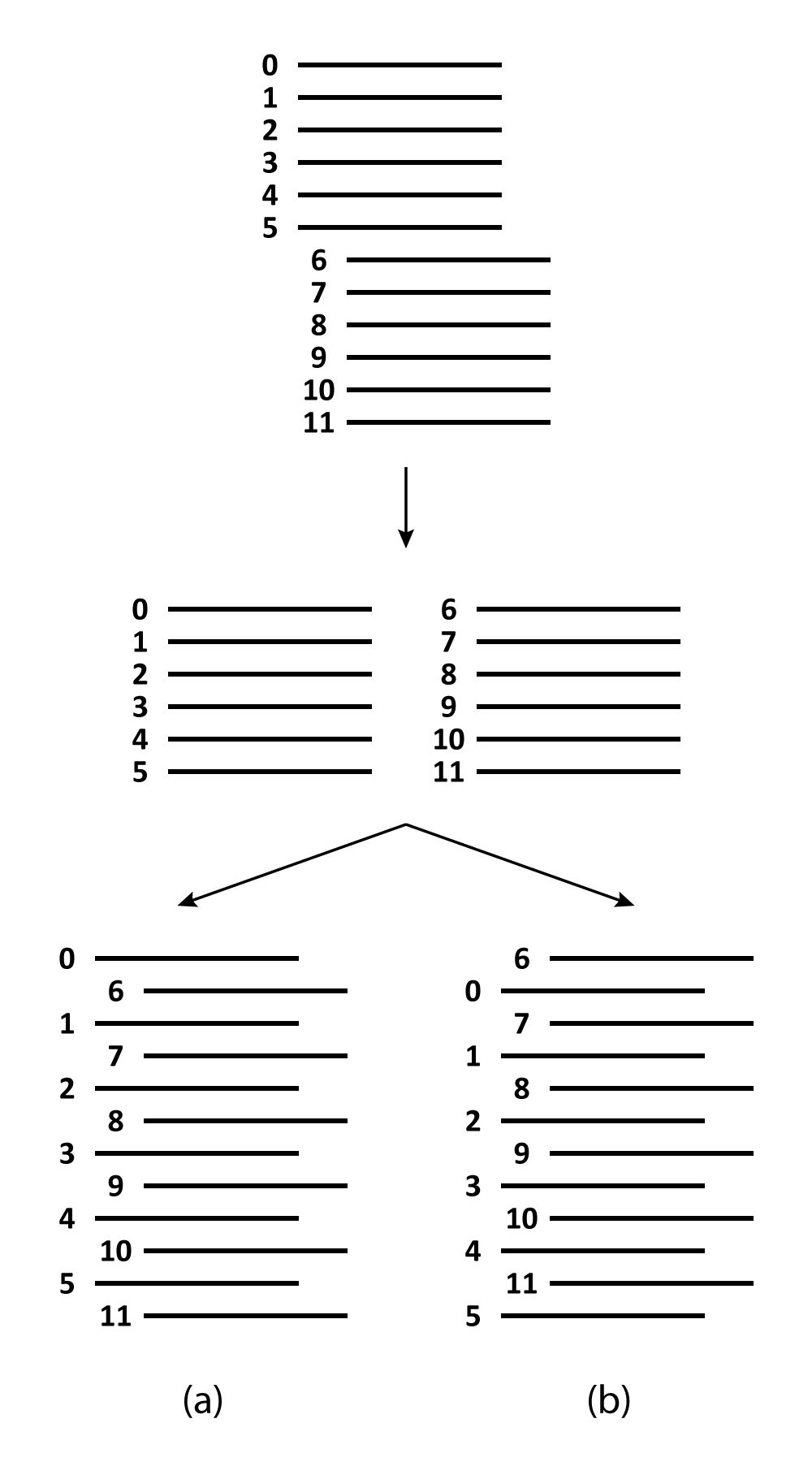}
  \end{center}
    \caption{Starting with a deck of 12 cards, we split the deck and perform (a) an out shuffle and (b) an in shuffle. }~\label{perfectshuffle}
\end{figure}

There are two ways that the interlacing of a perfect shuffle can be done. The {\em out shuffle} (denoted $O$) is the instance in which the original top card remains at the top, while the {\em in shuffle} (denoted $I$)  is the instance in which the original top card becomes the second card, lying underneath the top card of the second stack. See Figure~\ref{perfectshuffle} for an example. Perfect shuffles are permutations of the set of $2n$ cards, so the group generated  by the in shuffle and out shuffle $\langle I, O\rangle$ is a subgroup of the symmetric group $S_{2n}$.  In 1983, Persi Diaconis, Ron Graham, and William Kantor determined the group $\langle I, O\rangle$ for a deck of $2n$ cards for all $n$~\cite{Diaconis}. (The development of this paper by Diaconis et al.~was nicely described in narrative form in {\em Science}~\cite{Kolata}.) 

One might expect that the order of these shuffle groups $\langle I, O\rangle$ would grow with the size of the deck, but this is not the case. Compare, for example, decks with 30 cards and 32 cards, respectively. With 30 cards, the group generated by in and out shuffles has order $15!\cdot 2^{14}$, which is over 21 quadrillion. And yet, with 2 more cards (32 cards total), the shuffle group has order $160$. (As we will see later, when the deck size is a power of 2, the shuffle group is always relatively small.)

Several colorful variations on perfect shuffles have since been studied, including flip shuffles, horseshoe shuffles, Monge shuffles, and milk shuffles, to name a few~\cite{Bayer, Butler, Diaconis, Ledet}. At the conclusion of their paper, Diaconis et al. mentioned another natural variation -- the so-called {\em generalized perfect shuffles}. We describe these shuffles here.

Suppose one has a deck of $mn$ cards. A generalized perfect shuffle (also called an $m$-shuffle) is performed first by dividing the cards into $m$ equal stacks (the first stack contains the first $n$ cards, the second stack contains the next $n$ cards, and so on). Place these $m$ stacks side by side in order from left to right.  A priori, there are $m!$ different patterns which one can use to interlace the cards from these $m$ stacks to reassemble the deck. We consider two interlacing patterns which directly generalize the standard in and out perfect shuffles: 
\begin{itemize}
\item {\bf Out $m$-shuffle} (denoted $O_{m}$). Pick up the cards from the $m$ stacks from {\em left to right}. In this case, the original top card will remain the top card when the shuffle is complete.
\item {\bf In $m$-shuffle} (denoted $I_m$). Pick up the cards from the  $m$ stacks from {\em right to left}. In this case, the original top card will now be the $m^{th}$ card when the shuffle is complete.
\end{itemize}
See Figure~\ref{generalizedshuffle} for examples of 3-shuffles.  With this perspective, the standard perfect shuffles studied by Diaconis et al.~are a special case, being {\em 2-shuffles}. 

A variety of results have already been discovered about generalized perfect shuffles (or, $m$-shuffles, as we will call them). The group generated by an $m$-shuffle and the simple cut was investigated by Morris and Hartwig~\cite{MorrisHartwig}. They found the group in most cases to be either the full symmetric group of the deck or the alternating group of the deck. 
Working even more generally, others have considered allowing all $m!$ interlacing patterns of the $m$ stacks rather than restricting to the in and out $m$-shuffles. The groups generated in this broader context have been determined for several infinite families~\cite{Amarra, Medvedoff}.

Here in this work, we return to the original question posed by Diaconis et al.~\cite{Diaconis} and which was posed again in~\cite{Medvedoff}. Namely, we seek to find the group generated by in and out $m$-shuffles $\langle I_m,O_m \rangle$. The solution to the analogous question for the group $\langle I,O \rangle$ of standard perfect shuffles was a mathematical and computational tour de force requiring many cases. We expect the situation with generalized shuffles to be similarly complex. Here in this paper, we restrict to decks of size $m^k$ for some $m, k >1$. With this card deck, we consider $m^y$-shuffles for some $y<k$. 

A natural starting place is to consider 2-shuffles on decks of size $2^k$. This special case fits into the setting of standard perfect shuffles and was studied by Diaconis et al. They proved the following.

\begin{theorem}\cite{Diaconis}\label{standardshuffles}
Consider a deck of $2^k$ cards for some $k\geq1$. The 2-shuffle group $\langle I_2,O_2 \rangle$ has order $2^kk$ and is isomorphic to $\mathbb{Z}^k_2 \rtimes \mathbb{Z}_k$, where $\mathbb{Z}_k$ acts on $\mathbb{Z}^k_2$  by a cyclic shift.
\end{theorem}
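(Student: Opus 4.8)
The plan is to pin down $I_2$ and $O_2$ as concrete permutations of the position set $\{0,1,\dots,2^k-1\}$ written in binary, and then to recognize the group they generate as a group of affine maps of the vector space $\mathbb{F}_2^k$.

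First I would record the standard closed forms for perfect shuffles: on a deck of $2n$ cards labeled $0,\dots,2n-1$, the out shuffle sends $x\mapsto 2x\bmod(2n-1)$ and fixes $2n-1$, while the in shuffle sends $x\mapsto 2x+1\bmod(2n+1)$. Specializing to $2n=2^k$ and writing $x=\sum_{j=0}^{k-1}b_j2^j$, a direct check shows that $O_2$ is exactly the cyclic rotation of the $k$-bit string $(b_{k-1}\cdots b_0)$ --- the new bit in position $i$ is the old bit in position $(i-1)\bmod k$ --- and that $I_2$ is this same rotation followed by complementing the lowest bit. In the language of $\mathbb{F}_2^k$: if $\sigma$ denotes the cyclic shift (an $\mathbb{F}_2$-linear isomorphism) and $t_c$ denotes the translation $x\mapsto x\oplus c$, then $O_2=\sigma$ and $I_2=t_{e_0}\circ\sigma$ with $e_0=(0,\dots,0,1)$.

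Next I would extract the group $G:=\langle I_2,O_2\rangle$ from these formulas. From $I_2O_2^{-1}=t_{e_0}$ we get $t_{e_0}\in G$; since $\sigma$ is linear we have $\sigma t_c\sigma^{-1}=t_{\sigma c}$, so conjugating $t_{e_0}$ by powers of $O_2=\sigma$ yields all of $t_{e_1},\dots,t_{e_{k-1}}$, hence the whole translation subgroup $N=\{t_c:c\in\mathbb{F}_2^k\}\cong\mathbb{Z}_2^k$ lies in $G$. Thus $G=\langle N,\sigma\rangle$, and since conversely $N$ and $\sigma$ obviously belong to every group containing $I_2$ and $O_2$, this is an equality. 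Now $N\trianglelefteq G$ because $\sigma$ normalizes it; $\langle\sigma\rangle$ has order $k$ (the $\sigma$-orbit of $e_0$ has size $k$, while $\sigma^k=\mathrm{id}$); and $N\cap\langle\sigma\rangle=\{1\}$ because every nontrivial power of $\sigma$ fixes $0$ whereas the only translation fixing $0$ is the identity. Therefore $G=N\rtimes\langle\sigma\rangle\cong\mathbb{Z}_2^k\rtimes\mathbb{Z}_k$, the action of $\mathbb{Z}_k$ being conjugation by $\sigma$, i.e. the cyclic shift; and $|G|=|N|\cdot|\langle\sigma\rangle|=2^k\cdot k$.

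The only real obstacle is the verification in the first step that the arithmetic formulas for $I_2$ and $O_2$ collapse, on a deck of size exactly $2^k$, to ``a bit rotation'' and ``a bit rotation plus a single bit flip.'' This requires handling the fixed point $2^k-1$ of the out shuffle and splitting into the cases $x<2^{k-1}$ and $x\ge 2^{k-1}$ when reducing $2x+1$ modulo $2^k+1$; it is elementary but is where essentially all the content lives. After that the group theory is forced, and the answer is manifestly independent of conventions (which bit gets flipped, or the direction of the rotation) up to relabeling the standard basis vectors $e_j$.
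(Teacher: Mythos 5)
Your proposal is correct and follows essentially the same route as the paper: it establishes that in binary the out shuffle is a cyclic rotation of the $k$ bits and the in shuffle is that rotation composed with a single bit-flip (the paper's Lemma~\ref{basem} with $m=2$), and then builds the normal subgroup $\mathbb{Z}_2^k$ from the conjugates $O_2^{\,j}(I_2O_2^{-1})O_2^{-j}$ --- exactly the paper's elements $B_j$, since for $m=2$ ``flipping a digit'' is translation by a standard basis vector of $\mathbb{F}_2^k$ --- before assembling the semidirect product with $\langle O_2\rangle\cong\mathbb{Z}_k$. The affine-map packaging is a clean way to say the same thing, and the remaining unverified step you flag (the digit-level formulas) is precisely the computation the paper carries out.
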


Broadening this result, we find the structure of the $m^y$-shuffle group on a deck of $m^k$ cards, for all possible values of $m, k$, and $y$. The result nicely generalizes the previous special case. 

\begin{theorem}\label{mainresult}
Consider a deck of $m^k$ cards for some $m, k>1$. Let $y$ be a positive integer such that $y<k$ and $\gcd[y,k] = c$. The group generated by in and out $m^y$-shuffles on this deck $ \langle I_{m^y},O_{m^y} \rangle$ can be described as follows.
 
\begin{enumerate}
\item  
If $y/c$ is odd, then the shuffle group is isomorphic to $(\mathbb{Z}_2)^{k/c} \rtimes \mathbb{Z}_{k/c}$, where $\mathbb{Z}_{k/c}$ acts by a cyclic shift on $(\mathbb{Z}_2)^{k/c}$. 

\item If $y/c$ is even, then the shuffle group is isomorphic to $(\mathbb{Z}_2)^{\frac{k}{c}-1} \rtimes \mathbb{Z}_{k/c}$, where $\mathbb{Z}_{k/c}$ acts on on $(\mathbb{Z}_2)^{\frac{k}{c}-1}$ as follows:
$$\phi(1)\cdot(a_1, a_2, \ldots, a_{\frac{k}{c} -1}) = (a_{\frac{k}{c} -1},~ a_1+ a_{\frac{k}{c} -1},~ a_2+ a_{\frac{k}{c} -1},~ \ldots,~ a_{\frac{k}{c} -2} + a_{\frac{k}{c} -1}).$$

\end{enumerate}
\end{theorem}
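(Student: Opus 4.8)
The plan is to choose a coordinatization of the deck under which both shuffles become transparent. Index the $m^k$ positions by base-$m$ strings $(a_{k-1},\dots,a_0)$ with $a_i\in\{0,\dots,m-1\}$, the position $\sum_i a_i m^i$ carrying the string $(a_{k-1},\dots,a_0)$. Unwinding the definitions shows that, as permutations of these strings, $O_{m^y}$ is the cyclic shift by $y$ places and $I_{m^y}$ is that same shift followed by complementing (via $a\mapsto m-1-a$) each of the $y$ digits that wrapped around. Let $\sigma=O_m$ be the shift by one place, and for $0\le i\le k-1$ let $\kappa_i$ complement the digit in place $i$; writing $\kappa_S=\prod_{i\in S}\kappa_i$ for $S\subseteq\{0,\dots,k-1\}$, one has $\kappa_i^2=\sigma^k=\mathrm{id}$, the $\kappa_i$ pairwise commute, $\sigma\kappa_i\sigma^{-1}=\kappa_{i+1\bmod k}$, and $O_{m^y}=\sigma^{y}$, $I_{m^y}=\kappa_{\{0,\dots,y-1\}}\,\sigma^{y}$. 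Hence $G:=\langle I_{m^y},O_{m^y}\rangle=\langle\,\sigma^{y},\ \kappa_{\{0,\dots,y-1\}}\,\rangle$.

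Next I would realize $G$ as a semidirect product. Since $\gcd(y/c,k/c)=1$ we have $\langle\sigma^{y}\rangle=\langle\sigma^{c}\rangle$, a cyclic group of order $n:=k/c$, so $G=\langle\,\sigma^{c},\ \kappa_{\{0,\dots,y-1\}}\,\rangle$. Let $H$ be the subgroup of $\langle\kappa_0,\dots,\kappa_{k-1}\rangle\cong(\mathbb{Z}_2)^{k}$ generated by the conjugates $w_j:=\sigma^{jc}\,\kappa_{\{0,\dots,y-1\}}\,\sigma^{-jc}=\kappa_{\{jc,\dots,jc+y-1\}}$ (indices mod $k$); note $w_{j+n}=w_j$, so there are at most $n$ distinct $w_j$. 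Then $H$ is elementary abelian and is normalized by $\sigma^{c}$ by construction, while $H\cap\langle\sigma^{c}\rangle=\{\mathrm{id}\}$ because a nontrivial element of $H$ is some $\kappa_S$ with $S\neq\emptyset$, which moves the all-zero string, whereas every power of $\sigma^{c}$ fixes it. Therefore $G=H\rtimes\langle\sigma^{c}\rangle\cong H\rtimes\mathbb{Z}_n$, and a generator of $\mathbb{Z}_n$ acts on $H$ by the cyclic permutation $w_j\mapsto w_{j+1}$ of the generating set.

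Everything now reduces to determining $H$ as an $\mathbb{F}_2$-vector space together with this cyclic action, and this is the step I expect to be the crux. Identify $\langle\kappa_0,\dots,\kappa_{k-1}\rangle$ with $\mathbb{F}_2^{k}$ coordinatized by $\mathbb{Z}/k$. Because $c\mid y$, each $w_j$ is the indicator of a union of $q:=y/c$ consecutive blocks of the partition $\mathbb{Z}/k=\bigsqcup_{r=0}^{n-1}\{rc,\dots,rc+c-1\}$; writing $B_0,\dots,B_{n-1}$ for the (independent) block indicators we get $w_j=B_j+B_{j+1}+\dots+B_{j+q-1}$ with indices mod $n$, while $\sigma^{c}$ restricts to the shift $B_r\mapsto B_{r+1}$ on the span of the $B_r$. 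Passing to $\mathbb{F}_2[z]/(z^n-1)$ via $B_r\leftrightarrow z^{r}$, the subspace $H$ becomes the ideal (cyclic code) generated by $1+z+\dots+z^{q-1}$, of dimension $n-\deg\gcd\bigl(1+z+\dots+z^{q-1},\,z^n-1\bigr)$. Since $\gcd(z^{q}-1,z^n-1)=z^{\gcd(q,n)}-1=z-1$, and $1+z+\dots+z^{q-1}$ takes the value $q\bmod 2$ at $z=1$, this gcd equals $1$ when $q$ is odd and $z-1$ when $q$ is even (in the even case $n$ is forced odd, so $z^n-1$ is squarefree, pinning the gcd down exactly). Hence $\dim_{\mathbb{F}_2}H$ equals $n$ if $q$ is odd and $n-1$ if $q$ is even.

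It remains to read off the action. If $q$ is odd, $\{w_0,\dots,w_{n-1}\}$ is a basis of $H\cong(\mathbb{Z}_2)^{n}$ permuted cyclically by $\sigma^{c}$, which is part (1). If $q$ is even, a direct count shows every coordinate of $\mathbb{Z}/k$ lies in exactly $q$ of the $n$ intervals defining $w_0,\dots,w_{n-1}$, so $w_0+\dots+w_{n-1}=0$; by the dimension count this is the only relation, so $\{w_0,\dots,w_{n-2}\}$ is a basis, and substituting $w_{n-1}=w_0+\dots+w_{n-2}$ one checks that $\sigma^{c}$ sends $w_i\mapsto w_{i+1}$ for $0\le i\le n-3$ and $w_{n-2}\mapsto w_0+\dots+w_{n-2}$, which is exactly the automorphism $\phi(1)$ of part (2) under the identification $a_{i+1}\leftrightarrow w_i$ ($0\le i\le n-2$). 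The remaining work---verifying the base-$m$ formulas for $O_{m^y},I_{m^y}$ and the elementary gcd and counting facts used above---is routine.
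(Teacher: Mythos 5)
Your proof is correct, and while it arrives at the same semidirect-product skeleton as the paper --- a normal elementary abelian $2$-subgroup generated by the conjugates of $I_{m^y}O_{m^y}^{-1}$ under powers of the out shuffle, complemented by the cyclic group those powers generate --- the way you compute the rank of that subgroup is genuinely different. The paper splits into three cases: for $\gcd(y,k)=1$ it uses the orders $k$ and $2k$ of $O_m$ and $I_m$ to replace $\langle I_{m^y},O_{m^y}\rangle$ by $\langle I_m,O_m\rangle$ ($y$ odd) or by $\langle I_{m^2},O_m\rangle$ ($y$ even), so that the conjugates flip a single digit or a single adjacent pair and the rank ($k$ or $k-1$) is read off by an elementary ``some entry stays flipped'' argument; the case $c>1$ is then reduced to these by rewriting indices in base $m^c$, which is exactly your block decomposition. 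You instead keep the width-$q$ flips and identify their span with the cyclic code generated by $1+z+\cdots+z^{q-1}$ in $\mathbb{F}_2[z]/(z^n-1)$, obtaining dimension $n$ or $n-1$ from $\gcd(z^q-1,z^n-1)=z-1$ together with the value of the generator at $z=1$ (your squarefreeness remark is not even needed, since the gcd already divides $z-1$). Your route is more uniform --- no reduction lemmas, and the parity of $y/c$ enters only at the final evaluation --- at the price of invoking the cyclic-code dimension formula; the paper's is more elementary but case-heavy. The steps you defer as routine (the base-$m$ digit formulas, the trivial intersection with $\langle\sigma^c\rangle$ via the all-zero string, the relation $w_0+\cdots+w_{n-1}=0$ when $q$ is even, and the identification of the induced action with $\phi$) all check out against the paper.
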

Certainly a few examples will clarify this result, and so we discuss examples throughout. Note that the order and the structure of these shuffle groups in Theorem~\ref{mainresult} are independent of the value of $m$. 

%The key to the proof of Theorem~\ref{standardshuffles} was to index the $2^k$ cards from 0 to $2^k - 1$ and to express the indices in binary notation~\cite{Diaconis}; we use a similar approach to prove Theorem~\ref{mainresult}. 

\begin{figure}
\begin{center}
    \includegraphics[width=7cm]{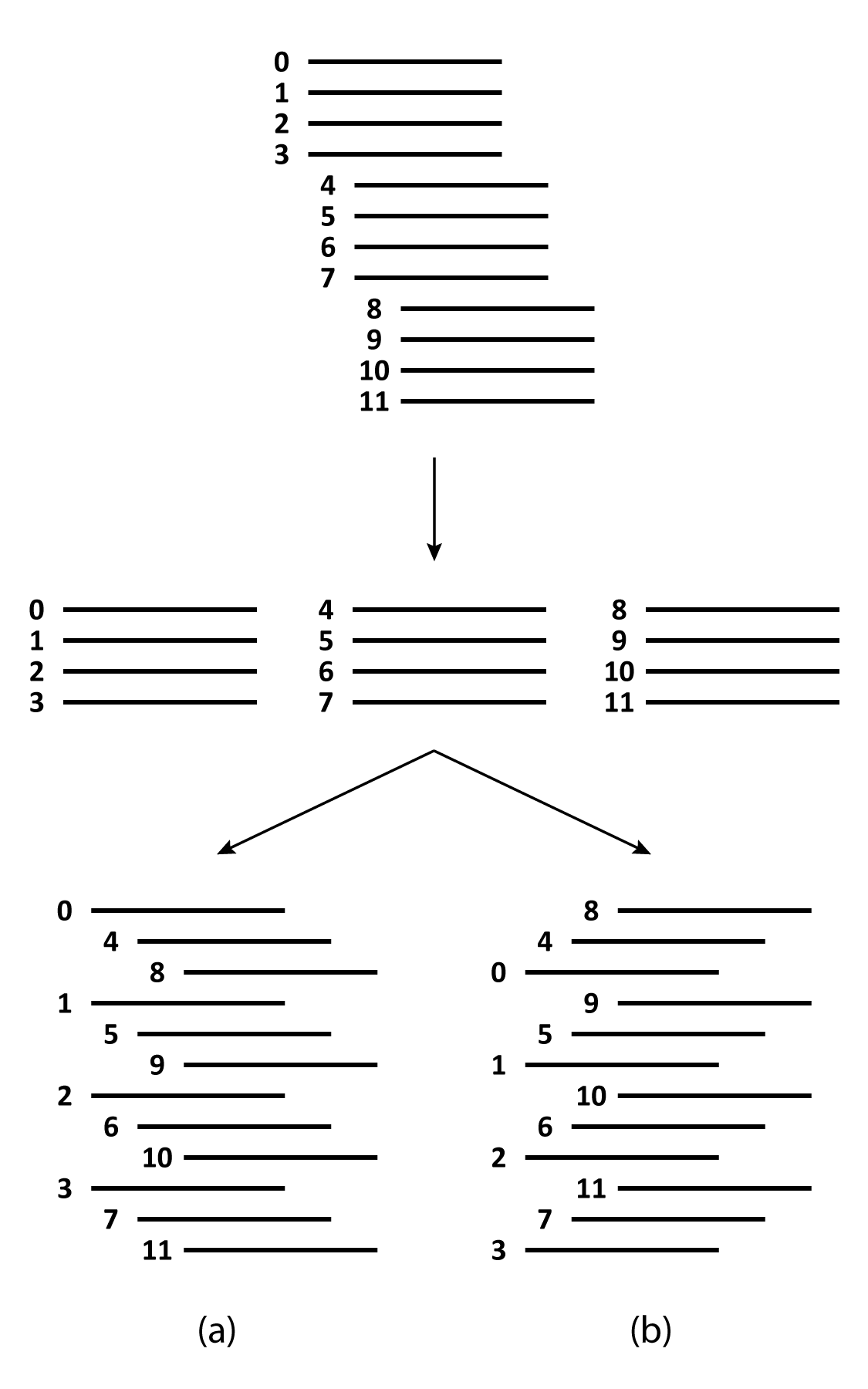}
    \end{center}
    \caption{Starting with a deck of 12 cards, we divide the deck into 3 stacks and perform generalized 3-shuffles: (a) an out 3-shuffle and (b) an in 3-shuffle. } \label{generalizedshuffle}
\end{figure}

\section{Standard perfect shuffles}\label{previousresults}
We begin by reviewing some of the beautiful mathematics underlying standard perfect shuffles (see~\cite{Diaconis} for the proofs). We only fit in a sampling of results here; further interesting patterns (and mathematical card tricks to impress your friends) are plentiful and provide ample opportunity for reading and investigation~\cite{DiaconisGraham, Ensley, Morris2, Ramnath, Rosenthal}.

Suppose we have a deck of $2n$ cards. Index each card by its distance from the top of the deck from $0$ to $2n-1$. 
Simple formulas give the location of each card after performing a standard perfect shuffle. The out shuffle fixes the cards with index $0$ and $2n-1$. And, for any card with index $i < 2n-1$,  the card's position after an out shuffle $O$ is as follows:
$$ O: i \longrightarrow 2i\!\!\!\pmod {2n - 1}$$
It follows, then, that the order of the out shuffle is given by the order of $2$ in $\mathbb{Z}_{2n-1}$.
An in shuffle $I$  moves the card with index $i$ as follows:
$$I:  i \longrightarrow 2i + 1\!\!\!\pmod {2n + 1}$$
From this, one can show that the order of the in shuffle is the order of $2$ in $\mathbb{Z}_{2n+1}$. 

For example, consider a deck of 52 cards. One needs to do only 8 out shuffles to bring the deck back to its original order, since 2 has order 8 in $\mathbb{Z}_{51}$. On the other hand, one must do 52 in shuffles before the deck returns to its original order, as 2 has order 52 in $\mathbb{Z}_{53}$.

Perfect shuffles preserve several types of symmetry in a deck of cards. Consider a pair of cards which are each located the same distance from the center of the deck. After an in or out shuffle, these two cards will both move to positions which are again the same distance from the center of the deck. Because of this, we say that in and out shuffles {\em preserve central symmetry}. 

The set of all permutations in $S_{2n}$ that preserve central symmetry form a subgroup of $S_{2n}$, which we denote by $B_n$. Observe that $B_n$ has order $n!~\!2^n$, and so it follows that the shuffle group $\langle I,O \rangle$ on $2n$ cards has order at most $n!~\!2^n$. In fact, the group $\langle I,O \rangle$ is isomorphic to $B_n$ if and only if $n\equiv 2 \pmod{4}$ and $n>6$. A deck of 52 cards $(n = 26$) is one example of this. Save for a few exceptional cases, the remaining shuffle groups $\langle I,O \rangle$ can be described as a kernel (or an intersection of kernels) of different homomorphisms from $B_n$ to $\mathbb{Z}_2$ (see~\cite{Diaconis} for details). The significant exceptional case is when the number of cards is a power of 2 (say, $2^k$). In this case, the shuffle group has order $2^kk$ and has structure $\mathbb{Z}_2^k \rtimes \mathbb{Z}_{k}$, as stated in Theorem~\ref{standardshuffles}.

Let's consider a specific example to illustrate Theorem~\ref{standardshuffles}. Begin with just 4 cards. A priori, we might be able to reach any of $24$ different card arrangements by shuffling these 4 cards. But Theorem~\ref{standardshuffles} tells us that we only reach 8 card arrangements with standard perfect shuffles, and the shuffle group is isomorphic to $(\mathbb{Z}_2 \times \mathbb{Z}_2) \rtimes \mathbb{Z}_2$. (This group is more commonly recognized as the dihedral group of a square, $D_4$.) 

This group's Cayley graph nicely illustrates the relationships among the card arrangements and shuffles. Let us consider each of these 8 possible card arrangements to be vertices on a graph. If one card arrangement can be obtained from another by an in shuffle or out shuffle, we draw a directed edge between the two associated vertices. In this example, the resulting graph is a cube. See Figure~\ref{FourCards}.

\begin{figure}\
\begin{center}
\includegraphics[width=10cm]{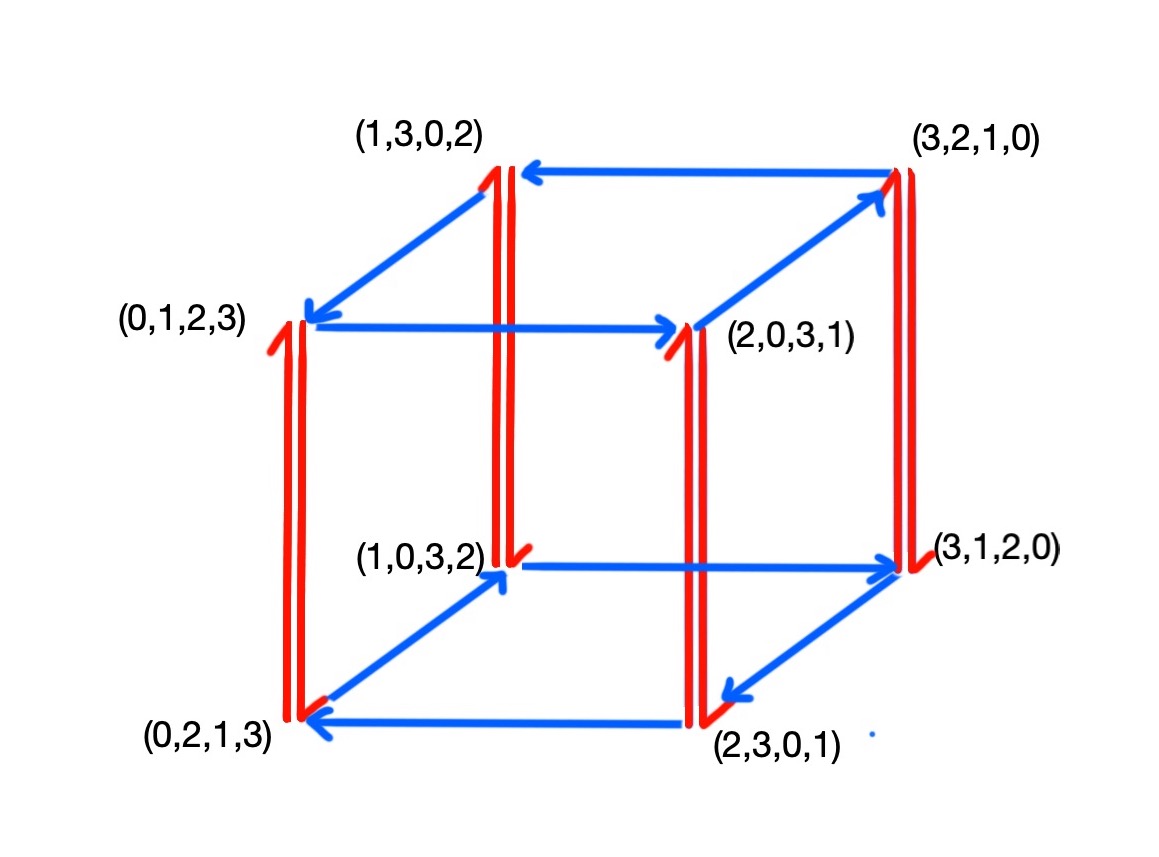}
\end{center}
\caption{Four cards (numbered in original order as 0, 1, 2, 3) can be shuffled with out shuffles (red) and in shuffles (blue) to reach eight different possible orderings total.}\label{FourCards}
\end{figure}

\section{Generalized perfect shuffles}\label{generalized}
Now we expand our perspective to consider a deck of $mn$ cards, together with in and out $m$-shuffles (denoted $I_m$ and $O_m$, respectively). We begin by studying the basic properties of these shuffles. The following lemma (also proved in~\cite{Medvedoff}) provides the critical information of where cards end up after an $m$-shuffle.\\

\begin{lemma}~\label{out}
Consider a deck of $mn$ cards. Index each card in the deck by its distance from the top of the deck from $0$ to $mn-1$.
\begin{enumerate}
\item  The out $m$-shuffle $O_m$ fixes the cards with index 0 and $mn-1$, and for $i < mn-1$, the index of the $i^{th}$ card after an out $m$-shuffle is:
$$O_m: i \longrightarrow  mi \pmod{mn - 1}$$
As such, the order of $O_m$ is the order of $m$ in $\mathbb{Z} _{mn-1}.$
\item The index of the $i^{th}$ card of the deck after an in $m$-shuffle $I_m$ is:
$$I_m: i  \longrightarrow  mi + (m-1) \pmod{mn + 1}$$
Thus the order of $I_m$ is the order of $m$ in $\mathbb{Z}_{mn+1}$.
\end{enumerate}
\end{lemma}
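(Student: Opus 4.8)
The plan is to track a single card through the shuffle by recording which stack it starts in and how deep it sits in that stack, then read off its new position directly from the definition of the interlacing, and finally recognize the resulting assignment as multiplication (or an affine map) modulo $mn\mp 1$. First I would fix notation: a card at index $i$ can be written uniquely as $i = jn + r$ with $0 \le j \le m-1$ and $0 \le r \le n-1$, so it is the card at depth $r$ (counting from $0$) in the $j$-th stack (counting from $0$, left to right). For the out $m$-shuffle, the reassembled deck takes the depth-$0$ cards of stacks $0, 1, \dots, m-1$ as its first $m$ cards, then the depth-$1$ cards, and so on; hence the card at depth $r$ in stack $j$ lands at position $rm + j$. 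For the in $m$-shuffle the stacks are instead picked up right to left, so that same card lands at position $rm + (m-1-j)$.

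Next I would verify the congruences by a direct computation. For $O_m$ we have $mi = m(jn+r) = (rm + j) + j(mn-1)$, so $rm + j \equiv mi \pmod{mn-1}$; this identity is valid for every index, but when $i = mn-1$ (equivalently $j = m-1$, $r = n-1$) the true image $rm+j = mn-1$ is not the residue $0 = mi \bmod (mn-1)$, which is precisely why the formula must be stated for $i < mn-1$, and one checks directly that $i = 0$ and $i = mn-1$ are both fixed. For $I_m$ we have $mi + (m-1) = m(jn+r) + (m-1) = \bigl(rm + (m-1-j)\bigr) + j(mn+1)$, so $I_m: i \mapsto mi + (m-1) \pmod{mn+1}$; here $rm + (m-1-j)$ ranges bijectively over $\{0,1,\dots,mn-1\}$ as $r$ and $j$ vary, so no exceptional index arises.

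Finally, for the orders. The out shuffle permutes $\{0,1,\dots,mn-1\}$ so that the index $mn-1$ is fixed while the action on the remaining indices, identified with $\mathbb{Z}_{mn-1}$, is $x \mapsto mx$; since $\gcd(m, mn-1) = \gcd(m,1) = 1$ this is a genuine permutation of $\mathbb{Z}_{mn-1}$ fixing $0$, and the order of such a multiplication map is the multiplicative order of $m$ modulo $mn-1$. For the in shuffle I would conjugate the affine map $x \mapsto mx + (m-1)$ on $\mathbb{Z}_{mn+1}$ by the translation $x \mapsto x+1$: since $x + 1 \mapsto m(x+1)$, the in shuffle is conjugate to multiplication by $m$ on $\mathbb{Z}_{mn+1}$ (and $\gcd(m, mn+1) = 1$), whose order is $\mathrm{ord}_{mn+1}(m)$.

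None of the individual steps is deep; the one place that requires care — the main obstacle, such as it is — is getting the bookkeeping of the interlacing exactly right, namely the depth/stack decomposition and the off-by-one difference between left-to-right and right-to-left pickup, so that the two congruences emerge with the stated moduli and the single exceptional index $i = mn-1$ for $O_m$ is correctly accounted for.
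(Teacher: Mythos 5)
Your proposal is correct and takes essentially the same approach as the paper: decompose the index $i$ into stack and depth, read the new position off the pickup order, and verify the two congruences by direct algebra, treating $i=mn-1$ as the one exceptional index for $O_m$. The only cosmetic differences are your zero-based indexing (the paper uses rows and columns indexed from $1$) and your conjugation-by-translation trick for the order of $I_m$, which slightly streamlines the paper's direct computation that $\ell$ in-shuffles send $i$ to $m^\ell i + m^\ell - 1 \pmod{mn+1}$.
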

\begin{proof}
Starting with the deck of cards indexed $0$ to $mn-1$, we divide the cards into $m$ equal stacks and place the stacks in order side by side from left to right. The cards are then in the following array:

\begin{align}~\label{array}
\begin{array}{cccccccccc} 0 &&& n &&& 2n&&  \cdots & (m-1)n\\ 1 && &n+1&& &2n+1 && \cdots & \vdots \\ 2 & &&n+2&& & 2n+2 &&  \cdots & \vdots \\ \vdots  && & \vdots &&& \vdots & &\vdots & \vdots \\n-1 && & 2n-1 &&& 3n-1 && \cdots & mn-1 
\end{array}
\end{align}

Consider the card lying in row $j$ and column $p$, for some $j$ and $p$. If we pick up the cards so as to perform an out $m$-shuffle, then $(p-1) + (j-1)m$ cards will be picked up {\em before} this card. So then, the index of this card when the out $m$-shuffle is finished will be: $(p-1) + (j-1)m$. 

Note that before the cards were shuffled, the card lying in row $j$ and column $p$ in the above array had the following original index:
 \begin{align}\label{index}
i = (p-1)n + j - 1.
\end{align}
Careful arithmetic using Equation~\ref{index} yields  $(p-1) + (j-1)m \equiv mi \pmod{mn-1}$ for $i < mn-1$, which proves the desired relationship in Part (1) of the lemma.

Observe, then, that performing $O_m$ repeatedly $r$  times will move a card in position $i$ to the position $m^ri \pmod{mn-1}$ for any $i < mn-1$. Therefore, each card will be back in its original position exactly when $m^r \equiv 1\pmod{mn-1}$. Hence the order of the shuffle is precisely the order of $m$ in $\mathbb{Z} _{mn-1},$ concluding the proof of Part (1) of the lemma.

Next we consider {\em in $m$-shuffles}.  As before, place the cards (indexed 0 to $mn-1$) into $m$ stacks, as indicated in Array~\ref{array}. If we do an in $m$-shuffle, then $(m-p) + (j-1)m$ cards will be picked up {\em before} the card that lies in row $j$ and column $p$. So then the index of the card after the shuffle is complete will be: $(m-p) + (j-1)m$. If the card's original index was $i$, careful arithmetic using Equation~\ref{index} yields  $$(m-p) + (j-1)m \equiv mi + m-1 \pmod{mn+1},$$ as desired.

Using this formula, we see that performing $I_m$ a sequence of $\ell$ times will move a card in position $i$ to  position $m^\ell i + m^\ell - 1 \pmod{mn+1}.$ Therefore each card will be back in its original position after $\ell$ shuffles exactly when $i \equiv m^\ell i + m^\ell - 1 \pmod{mn+1}$ for all $i$. This will be the case if and only if $m^\ell \equiv 1 \pmod{mn+1}$. Part (2) of the lemma follows, as before.
\end{proof}

With these basic facts in place, we turn our focus to the special case where the size of the deck of cards is a power of $m$, say, $m^k$.  
As before, we index each card in the deck by its distance from the top of the deck from $0$ to $m^k-1$. The index $i$ on each card can be naturally expressed in base-$m$ as a $k$-tuple of digits: $(x_1,x_2,\ldots,x_k)$, where
$$i =x_1m^{k-1} + x_2m^{k-2} + \cdots + x_{k-1}m + x_k.$$

In Lemma~\ref{out}, we discussed the effect of in and out $m$-shuffles on a card's index, but now we describe $m$-shuffles' effects on the base-$m$ index of a card.

\begin{lemma}\label{basem}
Consider a deck of $m^k$ cards. Index each card in the deck by its distance from the top of the deck from $0$ to $m^k-1$, and express this index in base-$m$ as a $k$-tuple $(x_1,x_2,\ldots,x_k)$. Then $m$-shuffles send a card with index $(x_1,x_2,\ldots,x_k)$ to a new position as follows:
\begin{align}
O_m: (x_1,x_2,\ldots,x_k) &\longrightarrow (x_2,x_3,\ldots,x_k,x_1)\label{out-formula}\\
I_m: (x_1,x_2,\ldots,x_k) &\longrightarrow (x_2,x_3,\ldots,x_k,\overline{x}_1),\label{in-formula}
\end{align}
where $\overline{x}_1 = (m-1)-x_1.$ Moreover, the shuffles $O_m$ and $I_m$ have order $k$ and $2k$, respectively.
\end{lemma}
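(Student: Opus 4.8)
The plan is to derive both base-$m$ formulas directly from the index formulas already established in Lemma~\ref{out}, by re-expressing multiplication by $m$ (and the affine map $i\mapsto mi+(m-1)$) in terms of base-$m$ digits, using the congruences $m^k\equiv 1\pmod{m^k-1}$ and $m^k\equiv -1\pmod{m^k+1}$. For the out shuffle, writing $i=x_1m^{k-1}+x_2m^{k-2}+\cdots+x_k$ gives $mi=x_1m^k+x_2m^{k-1}+\cdots+x_km$; reducing modulo $m^k-1$ replaces the leading term $x_1m^k$ by $x_1$, producing $x_2m^{k-1}+x_3m^{k-2}+\cdots+x_km+x_1$. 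Since every $x_j\in\{0,\dots,m-1\}$, this is already a legitimate base-$m$ expansion, namely the cyclic left shift $(x_2,x_3,\dots,x_k,x_1)$. Lemma~\ref{out} states the index formula only for $i<m^k-1$, but the two cards it fixes, index $0$ and index $m^k-1$, correspond to the tuples $(0,\dots,0)$ and $(m-1,\dots,m-1)$, which the cyclic shift also fixes; hence Equation~\eqref{out-formula} holds for every card.

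For the in shuffle, I would start from $mi+(m-1)=x_1m^k+x_2m^{k-1}+\cdots+x_km+(m-1)$ and record the exact integer identity
$$mi+(m-1)=(m^k+1)\,x_1+\bigl(x_2m^{k-1}+\cdots+x_km+(m-1-x_1)\bigr).$$
Because $0\le x_1\le m-1$, the bracketed quantity lies in $\{0,1,\dots,m^k-1\}$, so it is the reduction of $mi+(m-1)$ modulo $m^k+1$, and its base-$m$ digits are precisely $(x_2,x_3,\dots,x_k,\overline{x}_1)$ with $\overline{x}_1=(m-1)-x_1$; this is Equation~\eqref{in-formula}.

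For the orders, I would invoke Lemma~\ref{out} rather than iterate the tuple maps. The order of $O_m$ is the multiplicative order of $m$ in $\mathbb{Z}_{m^k-1}$; since $1\le m^j-1<m^k-1$ for $1\le j<k$ we have $m^j\not\equiv 1$, while $m^k\equiv 1$, so the order is exactly $k$ (equivalently, the cyclic shift of Equation~\eqref{out-formula} has order $k$ on $k$-tuples over an alphabet of size $m\ge 2$). Similarly the order of $I_m$ is the order of $m$ in $\mathbb{Z}_{m^k+1}$: from $m^k\equiv -1$ we get $m^{2k}\equiv 1$, and for $0<\ell<2k$ a short size comparison — handling $\ell\le k$ directly, and $\ell=k+r$ with $0<r<k$ via $m^\ell\equiv -m^r$, which would force $m^k+1\mid m^r+1$, impossible — rules out every smaller exponent, so the order is $2k$. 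It is also worth noting, as a sanity check, that $I_m^{\,k}$ is the digit-complementation map $(x_1,\dots,x_k)\mapsto(\overline{x}_1,\dots,\overline{x}_k)$, a non-identity involution, consistent with $I_m^{\,k}\neq\mathrm{id}$ and $I_m^{\,2k}=\mathrm{id}$.

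I expect no genuine obstacle here: the lemma is essentially a change of coordinates. The only points requiring care are verifying that the digit strings produced are valid base-$m$ representations (digits in range, plus the two endpoint cards for $O_m$), and, for the order of $I_m$, ruling out all proper divisors of $2k$ — which the reduction to the multiplicative order of $m$ modulo $m^k+1$ in Lemma~\ref{out} dispatches cleanly via elementary inequalities instead of divisor bookkeeping.
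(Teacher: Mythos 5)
Your proposal is correct and follows essentially the same route as the paper: both derive Equations~\eqref{out-formula} and~\eqref{in-formula} by substituting the base-$m$ expansion of $i$ into the index formulas of Lemma~\ref{out} and reducing modulo $m^k-1$ and $m^k+1$, respectively, and both then read off the orders $k$ and $2k$. You are somewhat more careful than the paper on the minor points it glosses over (checking that the resulting digit string is a valid base-$m$ representation, handling the fixed cards $0$ and $m^k-1$ for $O_m$, and ruling out proper divisors of $2k$ for the order of $I_m$), but these are refinements of the same argument rather than a different approach.
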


\begin{remark}
We will refer to the operation $\overline{x_i} = (m-1)-x_i$ as {\em flipping} the entry $x_i$. Observe that the flipping operation is of order two. That is, $\overline{\overline{x_i}} = x_i$.
\end{remark}

\begin{proof}
This lemma is an exercise in applying Lemma~\ref{out} to the special case of a deck of $m^k$ cards. 

Lemma~\ref{out} tells us that the cards with index 0 or $m^k-1$ are fixed by an out $m$-shuffle $O_m$, and a card with index $i < m^k-1$ is moved to index $mi \pmod{m^k-1}$. We compute:
\begin{align*}
mi & = m(x_1m^{k-1} + x_2m^{k-2} + \cdots + x_{k-1}m + x_k)\\
& = x_1m^{k} + x_2m^{k-1} + \cdots + x_{k-1}m^2 + x_km\\
& \equiv x_2m^{k-1} + \cdots + x_{k-1}m^2 + x_km + x_1 \pmod{m^k - 1}
\end{align*}
Therefore, it follows that for all $i$, the out $m$-shuffle has the following effect on a card's base-$m$ index: $O_m: (x_1,x_2,\ldots,x_k) \longrightarrow (x_2,x_3,\ldots,x_k,x_1)$, as desired. Clearly, the shuffle must be repeated $k$ times in order for each $k$-tuple to return to its original form, so $O_m$ has order $k$.

Now let us move to in $m$-shuffles $I_m$. Lemma~\ref{out} tells us that a card with index $i$ is moved to index $mi + (m-1) \pmod{m^k+1}$. We compute this value:
\begin{align*}
mi + (m-1) & = m(x_1m^{k-1} + x_2m^{k-2} + \cdots + x_{k-1}m + x_k) + (m-1)\\
& = x_1m^{k} + x_2m^{k-1} + \cdots + x_{k-1}m^2 + x_km + (m-1)\\
& \equiv x_2m^{k-1} + \cdots + x_{k-1}m^2 + x_km + (m-1) -  x_1 \pmod{m^k + 1}
\end{align*}
Hence, the in $m$-shuffle $I_m$ has the following effect on a card's base-$m$ index:\\
$I_m: (x_1,x_2,\ldots,x_k) \longrightarrow (x_2,x_3,\ldots,x_k,\overline{x}_1),$  where $\overline{x}_1 = (m-1)-x_1.$  The shuffle clearly must be repeated $2k$ times in order for each $k$-tuple to return to its original form, so $I_m$ has order $2k$.

 \end{proof}

With a deck of $m^k$ cards, we can also consider $m^2$-shuffles, $m^3$-shuffles, and so on. But observe that an $m^y$-shuffle for any positive $y<k$ is equivalent to repeating an $m$-shuffle $y$ times. In light of Equations~\ref{out-formula} and~\ref{in-formula}, it follows that $m^y$-shuffles have the following effect on the base-$m$ expansion of the card's index:
 \begin{align}
O_{m^y}: (x_1,x_2,\ldots,x_k) &\longrightarrow (x_{y+1},\ldots,x_k,x_1,\ldots,x_{y})\label{myshuffles}\\
 I_{m^y}: (x_1,x_2,\ldots,x_k) &\longrightarrow (x_{y+1},\ldots,x_k,\overline{x_1},\ldots,\overline{x_{y}})\label{myshuffles2}
 \end{align}
 \noindent where $\overline{x_i} = (m-1)-x_i$.\\
 
 %%%%%%%%%%%%%%%%%%%%%%%%%%%%%%%%%%%%%%%%%%%%%%%%%%%%%%%%%%%%%%%%%%%
 
 \section{Generalized perfect shuffle groups for decks of size $m^k$}~\label{results}
At last, we are ready to determine the structure of the group of $m^y$-shuffles on a deck of $m^k$ cards for all possible $m$, $k$, and $y$. Alongside these discoveries, we give concrete examples of these groups. The tools used here are all standard group theory techniques. 

We proceed in three cases, each of which is proved separately: Case (1) $y$ is odd and relatively prime to $k$ (Theorem~\ref{odd}), Case (2) $y$ is even and relatively prime to $k$ (Theorem~\ref{even}), and Case (3) $y$ and $k$ are not relatively prime (Corollary~\ref{corollary}). Together, these three results prove Theorem~\ref{mainresult} stated in the introduction.\\

\begin{theorem}~\label{odd} Consider a deck of $m^k$ cards for some $m, k>1$. Let $y$ be a positive odd integer such that $y<k$ and $y$ is relatively prime to $k$. The $m^y$-shuffle group is as follows: $\langle I_{m^y},O_{m^y} \rangle \cong \mathbb{Z}^k_2 \rtimes_\phi \mathbb{Z}_k$, where $\mathbb{Z}_k$ acts by a cyclic shift on $\mathbb{Z}^k_2$. That is, $$\phi(1)\cdot(a_1, a_2, \ldots, a_k) = (a_k, a_1, a_2, \ldots, a_{k-1}).$$
\end{theorem}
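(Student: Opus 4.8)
The plan is to identify $\langle I_{m^y},O_{m^y}\rangle$ with a concrete group of symmetries of base-$m$ digit strings and then show the two are equal. Working with the formulas \eqref{myshuffles}--\eqref{myshuffles2}, write $c$ for the one-step cyclic shift $(x_1,\dots,x_k)\mapsto(x_2,\dots,x_k,x_1)$ (this is just $O_m$), and for $1\le j\le k$ write $t_j$ for the flip that replaces $x_j$ by $\overline{x_j}=(m-1)-x_j$ and leaves the other digits alone. The $t_j$ are commuting involutions, independent because $m>1$, so $T:=\langle t_1,\dots,t_k\rangle\cong(\mathbb{Z}_2)^k$; conjugation by $c$ cyclically permutes the $t_j$, the cyclic group $\langle c\rangle\cong\mathbb{Z}_k$ intersects $T$ trivially (a nontrivial power of $c$ fixes the all-zeros string while a nontrivial element of $T$ does not), and hence $B:=\langle T,c\rangle=T\rtimes\langle c\rangle\cong(\mathbb{Z}_2)^k\rtimes_\phi\mathbb{Z}_k$ with $\phi$ a cyclic shift and $|B|=2^kk$. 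Equations \eqref{myshuffles}--\eqref{myshuffles2} say exactly that $O_{m^y}=c^y$ and that $I_{m^y}$ is $O_{m^y}$ followed by the flip of the last $y$ digits — equivalently $I_{m^y}=c^y(t_1t_2\cdots t_y)$, first applying the flips — so $\langle I_{m^y},O_{m^y}\rangle\le B$, and everything reduces to proving $B\le\langle I_{m^y},O_{m^y}\rangle$.

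For that, I would first observe that $\gcd(y,k)=1$ gives $\langle O_{m^y}\rangle=\langle c^y\rangle=\langle c\rangle$, so $\langle c\rangle$ (hence $O_m$ itself) already lies in the shuffle group. Next, $\tau:=O_{m^y}^{-1}I_{m^y}=t_1t_2\cdots t_y$ lies in the shuffle group, and so do all its conjugates by powers of $c$, which are precisely the length-$y$ cyclic windows $t_{a+1}t_{a+2}\cdots t_{a+y}$ for $a\in\mathbb{Z}_k$ (indices read mod $k$). It then suffices to show these windows generate $T$; passing to the $\mathbb{F}_2$-vector-space picture $T\cong\mathbb{F}_2^k$ with $t_j\leftrightarrow e_j$ and group multiplication $\leftrightarrow$ addition, this becomes the statement that the vectors $w_a:=e_{a+1}+\cdots+e_{a+y}$, $a\in\mathbb{Z}_k$, span $\mathbb{F}_2^k$.

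This spanning fact is the crux, and the only place the hypothesis that $y$ is odd is used. The argument I have in mind has two moves. First, $w_a+w_{a+1}=e_{a+1}+e_{a+1+y}$ (the overlap cancels over $\mathbb{F}_2$), and summing such differences along the orbit of $r\mapsto r+y$ — a single $k$-cycle on $\mathbb{Z}_k$ since $\gcd(y,k)=1$ — shows that $e_1+e_r$ is in the span for every $r\neq1$; these $k-1$ vectors already span the even-weight subspace $V_0\subset\mathbb{F}_2^k$. Second, $w_0=e_1+\cdots+e_y$ has weight $y$, which is odd, so $w_0\notin V_0$; hence the span properly contains $V_0$ and must be all of $\mathbb{F}_2^k$. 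This puts $T$ inside the shuffle group, so $B=\langle T,c\rangle\le\langle I_{m^y},O_{m^y}\rangle$, which together with the first paragraph gives $\langle I_{m^y},O_{m^y}\rangle=B\cong(\mathbb{Z}_2)^k\rtimes_\phi\mathbb{Z}_k$; taking $c^{-1}$ as the distinguished generator of $\mathbb{Z}_k$ matches $\phi$ with the cyclic-shift formula in the statement. The main obstacle is exactly this $\mathbb{F}_2$-linear-algebra step; the rest is bookkeeping with \eqref{myshuffles}--\eqref{myshuffles2} and elementary semidirect-product facts.
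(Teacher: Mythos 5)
Your proposal is correct, but it reaches the result by a genuinely different route from the paper. The paper first proves the case $y=1$ by exhibiting the single-digit flips directly as $B_j = O_m^{j-1}I_mO_m^{-j}$, and then disposes of all larger odd $y$ coprime to $k$ in one stroke: since $O_m$ has order $k$ and $I_m$ has order $2k$, and $y$ is coprime to both $k$ and $2k$, one gets $\langle O_{m^y}\rangle=\langle O_m\rangle$ and $\langle I_{m^y}\rangle=\langle I_m\rangle$, so $\langle I_{m^y},O_{m^y}\rangle=\langle I_m,O_m\rangle$ and nothing new needs to be proved. You instead attack general $y$ head-on: you show the shuffle group contains all the length-$y$ cyclic ``window'' flips $t_{a+1}\cdots t_{a+y}$ and then prove over $\mathbb{F}_2$ that these windows span all of $\mathbb{F}_2^k$, using $w_a+w_{a+1}=e_{a+1}+e_{a+1+y}$ together with $\gcd(y,k)=1$ to generate the even-weight subspace and the odd weight of $w_0$ to escape it. Your linear-algebra step checks out, and your verification of the upper bound $\langle I_{m^y},O_{m^y}\rangle\le T\rtimes\langle c\rangle$ is if anything more explicit than the paper's. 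The trade-off: the paper's coprimality reduction is shorter and avoids the spanning computation entirely, while your argument isolates exactly where the parity of $y$ matters (odd window weight versus the even-weight subspace) and sets up a framework that transfers almost verbatim to the even case of Theorem~\ref{even}, where the windows span only the $(k-1)$-dimensional even-weight subspace. One cosmetic point: whether the distinguished generator of $\mathbb{Z}_k$ is $c$ or $c^{-1}$ is immaterial, since the two choices give isomorphic semidirect products.
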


\begin{proof}
We first solve the problem in the special case $y = 1$. Then we will show that if $y > 1$, we can reduce the situation back down to this special case. 

Consider $m$-shuffles $I_m$ and $O_m$ acting on a deck of $m^k$ cards. Using these two shuffles, we create the following set of shuffles:
$$\{B_j = O_{m}^{j-1} I_{m} O_{m}^{-j} ~| ~j = 1, 2, \ldots k\}.$$ 
(Note that we read the multiplication of shuffles from left to right.) 

Now let's consider the effect of each of the shuffles $B_j$ on a card's base-$m$ index. In light of Equations~\ref{out-formula} and~\ref{in-formula}, the shuffle $B_1 = I_{m} O_{m}^{-1}$ flips the first entry of a card's base-$m$ index and makes no other changes. The shuffle $B_2 = O_m I_mO_m^{-2} $ only flips the second entry in the card's index. This pattern continues for all $B_j$. For example, when $k=5$, we have:
 \begin{align*}
 B_1:(x_1,x_2, x_3, x_4, x_5) &\longrightarrow (\overline{x_1}, x_2, x_3, x_4, x_5)\\
 B_2:(x_1,x_2, x_3, x_4, x_5)&\longrightarrow  (x_1, \overline{x_2}, x_3, x_4, x_5)\\
 B_3:(x_1,x_2, x_3, x_4, x_5)&\longrightarrow  (x_1, x_2, \overline{x_3}, x_4, x_5)\\
 B_4:(x_1,x_2, x_3, x_4, x_5) &\longrightarrow (x_1, x_2, x_3, \overline{x_4}, x_5)\\
 B_5:(x_1,x_2, x_3, x_4, x_5) &\longrightarrow  (x_1, x_2, x_3, x_4, \overline{x_5})
\end{align*}

Observe that $B_iB_j = B_jB_i$ for all $i, j$. Hence the group $\langle B_1, B_2,\ldots, B_k \rangle$ is abelian. Moreover every element in the group has order 2, since $\overline{\overline{x_i}} = x_i$. Finally, note that the generating set for the group $\langle B_1, B_2,\ldots, B_k \rangle$ cannot be reduced in size, since each shuffle changes a disjoint part of a card's base-$m$ index. Thus it follows that $\langle B_1, B_2,\ldots, B_k \rangle \cong \mathbb{Z}_2^k$.

This group $\langle B_1, B_2,\ldots, B_k \rangle$ intersects the group generated by the out $m$-shuffle $\langle O_m \rangle$ only at the identity shuffle. Moreover, the product of the two groups contains both $O_m$ and $I_m$ and hence generate the whole group $\langle I_m, O_m \rangle$. Finally, observe that $\langle O_m \rangle$ has order $k$, and $O_m$ acts on the shuffles $B_j$ via a cyclic shift: 
 \begin{align*}
 O_mB_jO_m^{-1} = 
 \begin{cases}
 B_{j+1}  &\textrm{ if } j = 1,\ldots,k-1\\
 B_{1} &\textrm{ if } j = k.
 \end{cases}
 \end{align*}
Therefore we conclude that the $m$-shuffle group on $m^k$ cards $\langle I_m,O_m \rangle$  is isomorphic to $\mathbb{Z}^k_2 \rtimes \mathbb{Z}_k$, where the generator of $\mathbb{Z}_k$ acts on $\mathbb{Z}_2^k$ by a cyclic shift. So we have proved the theorem in the special case $y = 1$. 

Now we consider the situation of doing $m^y$-shuffles on $m^k$ cards, where $y>1$ is odd and relatively prime to $k$. As we noted in the previous section, the shuffles $O_{m^y}$ and $I_{m^y}$ can be expressed as repeated application of $m$-shuffles:
$$O_{m^y} = (O_m)^y  \hspace{.5cm} \text{and} \hspace{.5cm}  I_{m^y} = (I_m)^y.$$
Moreover, recall that the $m$-shuffles $O_m$ and $I_m$ have orders $k$ and $2k$, respectively. Since $y$ is relatively prime to both of these values, it follows that 
$$\langle O_{m^y} \rangle = \langle~ (O_m)^y ~\rangle = \langle O_m \rangle $$ and  $$\langle I_{m^y} \rangle = \langle~ (I_m)^y ~\rangle = \langle I_m \rangle.$$ 
Hence we have $\langle O_{m^y}, I_{m^y} \rangle = \langle I_m,O_m \rangle$, which proves that this group  $\langle O_{m^y}, I_{m^y} \rangle$ has the same structure as the case $y=1$, as desired. 

\end{proof}

Lest we lose the forest for the trees, let's look at an example that illustrates the theorem we have just proved. Suppose we have $m^2$ cards, for some natural number $m>1$. Theorem~\ref{odd} tells us that we are only able to reach a total of 8 card orderings with $m$-shuffles on these $m^2$ cards. The shuffle group is $(\mathbb{Z}_2 \times \mathbb{Z}_2) \rtimes \mathbb{Z}_2$. (This group is more commonly known as the dihedral group of the square, $D_4$.) The associated Cayley graph illustrates the relationship among the shuffles. In this case, the graph is a cube, as shown in Figure~\ref{m^2Cards}. Observe that the group is no different from the special case of 2-shuffles on a deck of 4 cards which we discussed earlier. Indeed, the group structure is independent of the value of $m$. 

\begin{figure}
\begin{center}
\includegraphics[width=9cm]{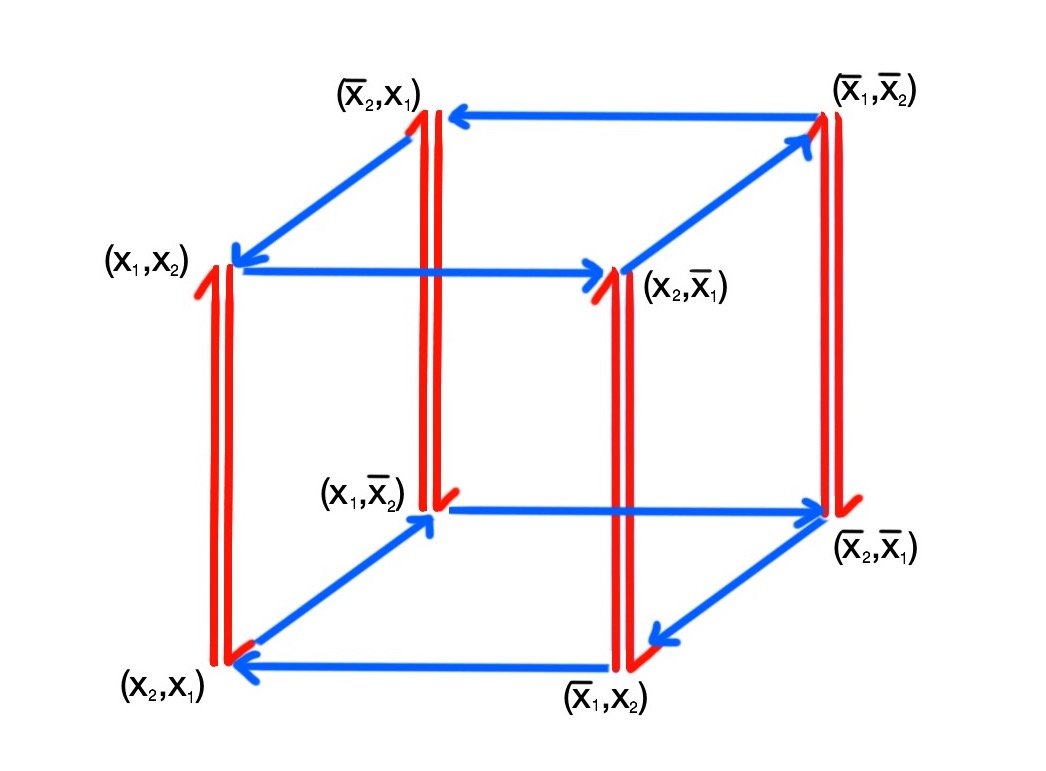}
\end{center}
\caption{A deck of $m^2$ cards can be shuffled with out $m$-shuffles (red) and in $m$-shuffles (blue) to reach eight different possible orderings total. Before shuffling, each card's position can be expressed in base $m$ as $(x_1, x_2)$, and after any $m$-shuffle, each card's position can be described in this format as well, where $\overline{x_i} = m - x_i$.
 }\label{m^2Cards}
\end{figure}

We now consider the case where $y$ is even and relatively prime to $k$. The theorem and proof are reminiscent of the previous case when $y$ is odd, with a few necessary twists. 

\begin{theorem}\label{even}
Consider a deck of $m^k$ cards for some $m, k>1$. Let $y$ be a positive even integer with $y<k$ and $y$ is relatively prime to $k$. Then the $m^y$-shuffle group is as follows: $\langle I_{m^y},O_{m^y} \rangle \cong \mathbb{Z}^{k-1}_2 \rtimes_\phi \mathbb{Z}_k$, where the action of $\mathbb{Z}_k$ on $\mathbb{Z}^{k-1}_2$ is given by:
$$\phi(1)\cdot(a_1, a_2, \ldots, a_{k-1}) = (a_{k-1}, ~~a_1+ a_{k-1}, ~~a_2+ a_{k-1},~ \ldots, ~~a_{k-2} + a_{k-1}).$$
\end{theorem}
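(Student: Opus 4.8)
The plan is to imitate the proof of Theorem~\ref{odd}: first settle the smallest case, then reduce the general case to it. Here the smallest relevant case is $y=2$, since $\gcd(y,k)=1$ together with $y$ even forces $k$ to be odd; in particular $y=1$ is not available, and this is why a genuinely new (non-cyclic) action shows up.

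First I would treat $y=2$. Because $k$ is odd, $\gcd(2,k)=1$, so $O_{m^2}=O_m^2$ generates the same cyclic group as $O_m$ — the order-$k$ group of cyclic shifts of the base-$m$ index from Lemma~\ref{basem} — and hence $O_m$ itself lies in $G:=\langle I_{m^2},O_{m^2}\rangle$. By Equations~\ref{myshuffles} and~\ref{myshuffles2}, the shuffle $C:=I_{m^2}O_{m^2}^{-1}$ sends $(x_1,x_2,x_3,\dots,x_k)$ to $(\overline{x_1},\overline{x_2},x_3,\dots,x_k)$: it flips exactly the first two entries and does nothing else. Conjugating by powers of $O_m$, set $C_j=O_m^{j-1}\,C\,O_m^{-(j-1)}$ for $j=1,\dots,k$, so that $C_j$ flips precisely the entries in positions $j$ and $j+1$ (indices read mod $k$). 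The $C_j$ pairwise commute, each is an involution, and, identifying the group they generate with a subgroup of $\mathbb{Z}_2^k$ recording the flip-parity of each coordinate, $N:=\langle C_1,\dots,C_k\rangle$ is the span of $\{e_j+e_{j+1}:j\in\mathbb{Z}/k\}$. That span is exactly the even-weight subspace of $\mathbb{Z}_2^k$ (it contains $e_1+e_j$ for $j=2,\dots,k$, which form a basis), so $N\cong\mathbb{Z}_2^{k-1}$. As in Theorem~\ref{odd}, conjugation by $O_m$ cyclically permutes the $C_j$, so $N\trianglelefteq G$; since every element of $N$ keeps the content of each position in place up to a flip while a nontrivial cyclic shift does not, $N\cap\langle O_m\rangle=\{1\}$; and $N\langle O_m\rangle$ contains $C$ and $O_m$, so it equals $G$ (note $G=\langle C,O_m\rangle$). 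Hence $G\cong\mathbb{Z}_2^{k-1}\rtimes\mathbb{Z}_k$, of order $2^{k-1}k$.

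To pin down the action — the one genuinely delicate point — I would use the basis $g_j=e_j+e_{j+1}$, $j=1,\dots,k-1$, of $N$. Since conjugation by $O_m$ sends $C_j\mapsto C_{j+1}$, it sends $g_j\mapsto g_{j+1}$ for $j\le k-2$ and sends $g_{k-1}\mapsto e_k+e_1=g_1+g_2+\cdots+g_{k-1}$, the lone wrap-around relation. Writing a general element of $N$ as $(a_1,\dots,a_{k-1})$ in this basis, $O_m$ then acts by $(a_1,\dots,a_{k-1})\mapsto(a_{k-1},\;a_1+a_{k-1},\;a_2+a_{k-1},\dots,a_{k-2}+a_{k-1})$, which is exactly $\phi(1)$. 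It is routine, just as in Theorem~\ref{odd}, that $G$ is a semidirect product $\mathbb{Z}_2^{k-1}\rtimes\mathbb{Z}_k$ with \emph{some} cyclic-shift-like twist; the subtlety is choosing the $C_j$ (equivalently, the basis $g_j$ of the even-weight subspace) so that this wrap-around term appears in precisely the advertised form. Everything else — abelianness and normality of $N$, the trivial intersection with $\langle O_m\rangle$, and the reduction below — is straightforward.

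Finally, for an arbitrary even $y$ coprime to $k$ (so $k$ is odd), I would reduce to $y=2$. On one hand, $O_{m^y}=O_m^y$ generates $\langle O_m\rangle$ and $I_{m^y}=I_m^y=(I_m^2)^{y/2}\in\langle I_m^2\rangle$, so $\langle I_{m^y},O_{m^y}\rangle\subseteq\langle I_m^2,O_m\rangle$. On the other hand, from $v_2(y)\ge 1=v_2(2k)$ and $\gcd(y,k)=1$ it follows that $\gcd(y,2k)=2$, so $\langle I_m^y\rangle=\langle I_m^2\rangle=\langle I_{m^2}\rangle$ in the order-$2k$ group $\langle I_m\rangle$, and $O_{m^2}=O_m^2\in\langle O_m\rangle$; this gives the reverse inclusion. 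Hence $\langle I_{m^y},O_{m^y}\rangle=\langle I_{m^2},O_{m^2}\rangle$, which has been analyzed above.
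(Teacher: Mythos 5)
Your proposal is correct and follows essentially the same route as the paper: the same generators $C_j=O_m^{j-1}I_{m^2}O_m^{-(j+1)}$ flipping adjacent pairs of base-$m$ digits, the same identification of the subgroup they generate as $\mathbb{Z}_2^{k-1}$ (the paper phrases the single relation as $C_1C_2\cdots C_k=Id$ where you phrase it as the even-weight subspace of $\mathbb{Z}_2^k$), and the same semidirect-product decomposition with $\langle O_m\rangle$ yielding the twisted cyclic action $\phi$. Your explicit change of basis $g_j=e_j+e_{j+1}$ is a slightly more transparent way to derive the formula for $\phi(1)$, but it is a repackaging of the paper's computation rather than a different argument.
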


\begin{proof}
To begin, we make some simplifying observations. Recall that since the out $m$-shuffle $O_m$ has order $k$ and since $y$ is relatively prime to $k$, it follows that 
$$\langle O_{m^y} \rangle = \langle~ (O_m)^y ~\rangle = \langle O_m \rangle.$$ 
And furthermore, since the in $m$-shuffle $I_m$ has order $2k$ and $\gcd(y, 2k) = 2$, we observe
$$\langle I_{m^y} \rangle = \langle~ (I_m)^y ~\rangle = \langle (I_m)^2 \rangle = \langle I_{m^2} \rangle.$$ 
Hence, $\langle  I_{m^y}, O_{m^y} \rangle = \langle I_{m^2}, O_m \rangle$. Therefore, it suffices to show that the group $\langle I_{m^2}, O_m \rangle$ has the structure described above in the statement of the theorem. 

To accomplish this, we create a new set of shuffles using $I_{m^2}$ and $O_m$. Consider the following: 
$$\{C_j = O_m^{j-1} I_{m^2} O_m^{-(j+1)}  ~| ~j = 1, 2, \ldots, k\}.$$ 
Despite their complicated appearance at first glance, we can easily describe the effect of these shuffles $C_j$ on the deck of cards. Using Equations~\ref{myshuffles} and~\ref{myshuffles2}, we observe that the shuffle $C_1 = I_{m^2} O_m^{-2}$ flips the first 2 entries of a card's base-$m$ index ($x_1$ and $x_2$), leaving everything else unchanged. The shuffle $C_2 = O_mI_{m^2} O_m^{-3}$ only flips the entries $x_2$ and $x_3$. This pattern continues, with $C_j$ only flipping the entries $x_j$ and $x_{j+1}$ for $j = 1, 2, \ldots k-1$. The final shuffle $C_k$ flips the entries $x_1$ and $x_k$. For example, in the case of $k = 5$, we have:
 \begin{align*}
 C_1:(x_1,x_2, x_3, x_4, x_5) &\longrightarrow (\overline{x_1}, \overline{x_2}, x_3, x_4, x_5)\\
 C_2:(x_1,x_2, x_3, x_4, x_5)&\longrightarrow  (x_1, \overline{x_2}, \overline{x_3}, x_4, x_5)\\
 C_3:(x_1,x_2, x_3, x_4, x_5)&\longrightarrow  (x_1, x_2, \overline{x_3}, \overline{x_4}, x_5)\\
 C_4:(x_1,x_2, x_3, x_4, x_5) &\longrightarrow (x_1, x_2, x_3, \overline{x_4}, \overline{x_5})\\
C_5:(x_1,x_2, x_3, x_4, x_5) &\longrightarrow (\overline{x_1}, x_2, x_3, x_4, \overline{x_5})
\end{align*}
These shuffles $C_j$ are all order 2 and commute with each other, so it follows that $\langle C_1, C_2, \ldots, C_k\rangle \cong (\mathbb{Z}_2)^d$, for some $d \leq k$.  

Consider the product of all of these shuffles:  $C_1C_2C_3 \cdots C_k$. This shuffle flips all entries in each card's base-$m$ index exactly 2 times. The net result, then, is that the shuffle returns every card to its original position. Therefore, we have $C_1C_2\cdots C_k = Id$, where $Id$ denotes the identity shuffle. We can rearrange this relation as: $C_k = C_1C_2\cdots C_{k-1}$. Hence the generators of the subgroup $\langle C_1, C_2, \ldots, C_k\rangle $ can be reduced: $\langle C_1, C_2, \ldots, C_k\rangle  \cong \langle C_1, C_2, \ldots, C_{k-1} \rangle $. 

We now claim that the generating set cannot be further reduced in size. That is, we claim that there are no nontrivial relations among $C_1, C_2, \ldots, C_{k-1}$. Since these shuffles $C_i$ all commute and have order 2, the only possible further nontrivial relation would be a product of a subset of $\{C_1, C_2, \ldots, C_{k-1}\}$, but any such product cannot be trivial because it will leave at least one entry $x_i$ flipped. Hence, we conclude that $\langle C_1, C_2, \ldots, C_{k-1}\rangle \cong (\mathbb{Z}_2)^{k-1}$

The group $\langle C_1, C_2, \ldots, C_{k-1}\rangle$ intersects the group generated by the out $m$-shuffle $\langle O_m \rangle$ only at the identity, and the elements of the two groups together generate both $O_m$ and $I_{m^2}$. Moreover, observe that $\langle O_m \rangle$ has order $k$, and $O_m$ acts on the shuffles $C_j$ as follows:
 \begin{align}~\label{cyclicshift}
 O_mC_jO_m^{-1} = \begin{cases}
 C_{j+1}  &\textrm{ if } j = 1,\ldots,k-2\\
 C_k = C_1C_2\cdots C_{k-1} &\textrm{ if } j = k-1
 \end{cases}
 \end{align}
Thus, $\langle I_{m^2},O_m \rangle$ is isomorphic to the group $\mathbb{Z}_2^{k-1} \rtimes_\phi  \mathbb{Z}_k$, where $\phi$ is given by 
$$\phi(1)\cdot(a_1, a_2, \ldots, a_{k-1}) = (a_{k-1}, ~a_1+ a_{k-1}, ~a_2+ a_{k-1},~ \ldots, ~a_{k-2} + a_{k-1}),$$
as desired.
\end{proof}

As an example of the above theorem, suppose that we have $m^3$ cards for some natural number $m>1$. If we do in and out $m^2$-shuffles on the deck, we are only able to reach 12 card orderings regardless of the value of $m$. By Theorem~\ref{even}, the shuffle group is $(\mathbb{Z}_2 \times \mathbb{Z}_2) \rtimes_\phi \mathbb{Z}_3$, where $\phi(1)(a_1, a_2) = (a_2, a_1 + a_2)$. This group is isomorphic to the alternating group $A_4$.  The associated Cayley graph takes the shape of the cuboctahedron as in Figure~\ref{m^3Cards}.

\begin{figure}
\begin{center}
\includegraphics[width=9cm]{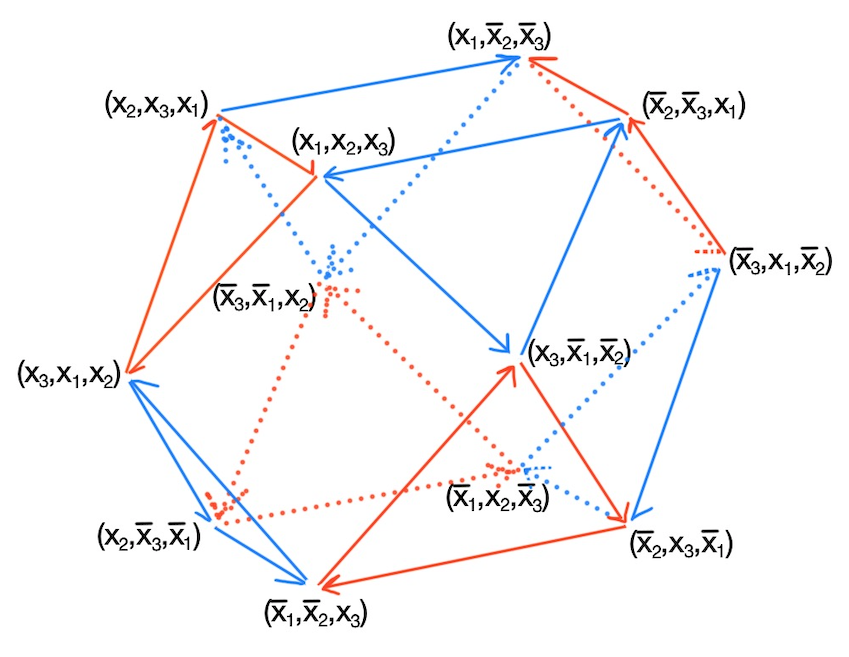}
\end{center}
\caption{A deck of $m^3$ cards can be shuffled with out $m^2$-shuffles (red) and in $m^2$-shuffles (blue) to reach twelve different possible orderings total. Before shuffling, each card's position can be expressed in base $m$ as $(x_1, x_2, x_3)$, and after any $m^2$-shuffle, each card's position can be described in this format as well, where we write $\overline{x_i}$ to denote $m - x_i$.}\label{m^3Cards}
\end{figure}

We conclude by considering the case where $k$ and $y$ are {\em not} relatively prime. This case follows as a corollary of the previous two results and, together with Theorems~\ref{odd} and~\ref{even}, completes the proof of Theorem~\ref{mainresult}.\\

\begin{corollary}~\label{corollary}
Consider a deck of $m^k$ cards for some $m, k>1$. Let $y$ be a positive integer with $y<k$ and $\gcd[y,k] = c>1$. The group generated by in and out $m^y$-shuffles on this deck $ \langle I_{m^y},O_{m^y} \rangle$ can be described as follows.

\begin{enumerate} 
\item If $y/c$ is odd, then the shuffle group is isomorphic to $\mathbb{Z}^{k/c}_2 \rtimes \mathbb{Z}_{k/c}$, where the action of $\mathbb{Z}_{k/c}$ is a cyclic shift as in Theorem~\ref{odd}.

\item If $y/c$ is even, then the shuffle group is isomorphic to $\mathbb{Z}^{\frac{k}{c}-1}_2 \rtimes \mathbb{Z}_{k/c}$, where the action of $\mathbb{Z}_{k/c}$ is as in Theorem~\ref{even}.
\end{enumerate}
\end{corollary}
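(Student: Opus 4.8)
The plan is to reduce the non-coprime case directly to Theorems~\ref{odd} and~\ref{even} by passing to a larger base. Write $c = \gcd(y,k)$ and set $k' = k/c$, $y' = y/c$, so that $\gcd(y',k') = 1$, $y' \geq 1$, and $y' < k'$ (the last inequality, coming from $y<k$, forces $k' \geq 2$). Put $M = m^{c}$; then $m^{k} = (m^{c})^{k'} = M^{k'}$ and $m^{y} = (m^{c})^{y'} = M^{y'}$, so the group in question is precisely the $M^{y'}$-shuffle group on a deck of $M^{k'}$ cards, with $y'$ coprime to $k'$ and $k' > 1$.

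The one thing to verify carefully is that the base-$m$ description of a card's index (a $k$-tuple) and its base-$M$ description (a $k'$-tuple) are compatible in the obvious way: grouping the base-$m$ digits into consecutive blocks of length $c$ recovers the base-$M$ digits. This is routine for the digit expansions themselves; the point that deserves a line is the interaction with flipping. Since $M - 1 = m^{c} - 1 = (m-1)(m^{c-1} + \cdots + 1)$ has base-$m$ expansion $(m-1, m-1, \ldots, m-1)$ with $c$ entries, flipping a base-$M$ digit $X \mapsto (M-1) - X$ corresponds exactly to flipping each of the $c$ base-$m$ digits in the block representing $X$. With this dictionary, Equations~\ref{myshuffles} and~\ref{myshuffles2} translate immediately: $O_{m^{y}}$ cyclically shifts the $k'$ base-$M$ digits by $y'$ places, and $I_{m^{y}}$ does the same while flipping (in base $M$) the $y'$ digits that wrap around. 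These are exactly the formulas of Lemma~\ref{basem} (and the $m^{y}$-shuffle formulas derived from it) applied to $M^{y'}$-shuffles on $M^{k'}$ cards, so $\langle I_{m^{y}}, O_{m^{y}}\rangle$ on $m^{k}$ cards is literally the same permutation group as $\langle I_{M^{y'}}, O_{M^{y'}}\rangle$ on $M^{k'}$ cards.

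It then remains to invoke the earlier theorems. Because $M = m^{c} > 1$, $k' > 1$, $0 < y' < k'$, and $\gcd(y',k') = 1$, the hypotheses are met: if $y' = y/c$ is odd, Theorem~\ref{odd} gives $\langle I_{M^{y'}}, O_{M^{y'}}\rangle \cong \mathbb{Z}_2^{k'} \rtimes \mathbb{Z}_{k'}$ with the cyclic-shift action; if $y' = y/c$ is even, Theorem~\ref{even} gives $\langle I_{M^{y'}}, O_{M^{y'}}\rangle \cong \mathbb{Z}_2^{k'-1} \rtimes \mathbb{Z}_{k'}$ with the twisted action. Substituting $k' = k/c$ yields the two cases of the statement. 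I do not expect a genuine obstacle here: the whole content is the base-change observation, and the only places to be attentive are the flip-compatibility across the two bases and the bookkeeping confirming $M = m^{c} > 1$ and $k/c > 1$ so that Theorems~\ref{odd} and~\ref{even} genuinely apply.
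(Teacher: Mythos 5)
Your proposal is correct and follows essentially the same route as the paper: both pass to the base-$m^c$ expansion of the card indices, observe that the $m^y$-shuffle formulas then coincide with those for $m^{y/c}$-shuffles on a deck of $(m^c)^{k/c}$ cards, and invoke Theorems~\ref{odd} and~\ref{even} since $\gcd(y/c,\,k/c)=1$. Your extra remarks on flip-compatibility across bases and on verifying $k/c>1$ are sensible refinements of the same argument, not a different approach.
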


\begin{proof}

Previously, we had considered each card's index in terms of its base-$m$ expansion. But now, we will consider each card's position in terms of its expansion in base $m^c$. As such, the card indices (which range from 0 to $m^k - 1$) can be expressed as a $(k/c)$-tuple in base $m^c$:  $(x_1,x_2, \ldots, x_{k/c})$.  

Recall that performing an $m^y$-shuffle is the same as performing an $m$-shuffle $y$ times. Equivalently, an $m^y$ shuffle can be accomplished by doing an $m^c$-shuffle $y/c$ times. Using this and the formulas in Lemma~\ref{basem}, the effect of perfect $m^y$-shuffles on a card's index written in base-$m^c$ is given as follows:
\begin{align*}
O_{m^y}: (x_1,x_2, \ldots, x_{k/c}) &\longrightarrow (x_{\frac{y}{c}+1},\ldots, x_{k/c}, x_1,x_2,\ldots ,x_{y/c})\\
I_{m^y}: (x_1,x_2, \ldots, x_{k/c}) &\longrightarrow (x_{\frac{y}{c}+1},\ldots, x_{k/c},\overline{x_1},\overline{x_2}, \ldots,\overline{x_{y/c}}),
\end{align*} 
 \noindent where $\overline{x_i} = (m^c-1)-x_i$.

Notice that the above two equations are identical to the equations for  of performing perfect $m^{y/c}$-shuffles on a deck of $m^{k/c}$ cards, as in Equations~\ref{myshuffles} and~\ref{myshuffles2}. Thus in this case, the structure of $m^y$-shuffle group on $m^k$ cards is exactly the same as that of $m^{y/c}$-shuffles on $m^{k/c}$ cards. And, since $y/c$ and $k/c$ are relatively prime, we can describe this group structure using our previous results.

Therefore, if $y/c$ is odd, then $\langle I_{m^y},O_{m^y} \rangle \cong \mathbb{Z}_2^{k/c} \rtimes \mathbb{Z}_{k/c}$, where the action of $\mathbb{Z}_{k/c}$ is a cyclic shift as in Theorem~\ref{odd}. If $y/c$ is even, then $\langle I_{m^y},O_{m^y} \rangle \cong \mathbb{Z}^{(k/c)-1}_2 \rtimes \mathbb{Z}_{k/c}$, where the action of $\mathbb{Z}_{k/c}$ is the action described in Theorem~\ref{even}.
\end{proof}

\begin{figure}
\begin{center}
    \begin{tabular}{| c | c | c |}
    \hline
    Size of deck & $m$ & Order of $\langle I_m,O_m \rangle$ \\ \hline
    4 & 2 & 8 \\ \hline \hline
    6 & 2 & 24 \\ \hline
    6 & 3 & 48 \\ \hline \hline
    8 & 2 & 24 \\ \hline
    8 & 4 & 12 \\ \hline \hline
    9 & 3 & 8 \\ \hline \hline
    10 & 2 & 1920 \\ \hline
    10 & 5 & 960 \\ \hline \hline
    12 & 2 & 7680 \\ \hline
    12 & 3 & 60 \\ \hline
    12 & 4 & 120 \\ \hline
    12 & 6 & 7680 \\ \hline \hline
    14 & 2 & 322560 \\ \hline
    14 & 7 & 645120 \\ \hline \hline
    15 & 3 & 384 \\ \hline
    15 & 5 & 384 \\ \hline \hline
    16 & 2 & 64 \\ \hline
    16 & 4 & 8 \\ \hline
    16 & 8 & 64 \\ \hline
    \end{tabular}
    \end{center}
 \caption{The orders of $m$-shuffle groups $\langle I_m,O_m \rangle$ for small deck sizes.}\label{Table}
 \end{figure}

\section{Parting observations}\label{future}
A natural next step in this investigation, of course, is to understand the situation for deck sizes which are {\em not} equal to $m^k$. We wrote a computer program to determine the order of $m$-shuffle groups for small deck sizes. See Figure~\ref{Table} for this information.

Another natural question returns to the idea of symmetry and shuffle groups. As mentioned previously, the standard perfect shuffle group $\langle I, O \rangle$ for a deck of $2n$ cards cannot be larger than $2^nn!$, since all elements in the group preserve central symmetry. Similarly, the $m$-shuffle groups $\langle I_m, O_m \rangle$ on a deck of $2n$ (or $2n+1$) cards also preserve central symmetry and hence can be no larger than $2^nn!$, as well~\cite{Medvedoff}. In their work on standard shuffles, Diaconis et al. described precisely the values of $n$ for which the standard shuffle group on $2n$ cards reaches this maximum value~\cite{Diaconis}. We ask whether such a result can be generalized? For example, consider a deck of $6$ cards. The maximum order possible is $48$, which is achieved with the 3-shuffle group. With a deck of 12 cards, however, the maximum order is 46,080, which is not realized by any $m$-shuffle group. What is the pattern?  \\

{\textbf{Acknowledgements: }
We thank Kent Morrison for his insightful comments on an early draft of this paper.}

\bibliographystyle{plain}
\bibliography{bibliography}

\end{document}